\newtheorem{theorem}{Theorem}
\theoremstyle{plain}
\newtheorem{corollary}{Corollary}
\newtheorem{lemma}{Lemma}
\newcommand{\RR}{\mathbb R}
\newcommand{\eins}{\mathbb 1}
\newcommand{\Sn}{S_n}
\newcommand{\set}[1]{\left\{\,#1\,\right\}}  
\newcommand{\with}{\ \vrule\ }  
\newcommand{\defa}{:=} 
\newcommand{\tk}{\tilde{k}}
\newcommand{\tw}{\tilde{w}}
\newcommand{\kg}{{\mathbb R}^{\Sn}}
\newcommand{\Uinv}{U_{Inv}}
\newcommand{\scp}[2]{\langle #1,#2 \rangle}
\newcommand{\poly}[1]{P(#1)}
\newcommand{\polyy}[1]{\mathfrak{P}(#1)}
\newcommand{\LOP}[1][n]{P_{#1}}
\newcommand{\lop}{p}
\newcommand{\wn}{\omega_0}
\newcommand{\cyc}{\mathfrak{c}}
\begin{document}

\begin{abstract}
Let $P_n$ denote the $n$-th linear ordering polytope. We define projections from $P_n$ to the $n$-th permutahedron and to the $(n-1)$-st linear ordering polytope. Both projections are equivariant with respect to the natural $\Sn$-action and they project to orthogonal subspaces. In particular the second projection defines an $S_n$-action in $P_{n-1}$.
\end{abstract}
\title{The Linear Ordering Polytope via Representations}
\author{Lukas Katth\"an}
\address{Fachbereich Mathematik und Informatik, Philipps Universit\"at, 35032 Marburg, Germany}%
\email{katthaen@mathematik.uni-marburg.de}%

\subjclass[2010]{Primary 52B12} 

\maketitle
\section{Introduction}
Let $\Sn$ denote the symmetric group. For a permutation $\pi \in\Sn$ and $1\leq i<j\leq n$ we set 
\[k_{ij}(\pi) \defa 
\begin{cases}
1 &\textnormal{ if } \pi(i) > \pi(j) \\
0 &\textnormal{ if } \pi(i) < \pi(j) \\
\end{cases}\]
The $n$-th \emph{Linear Ordering Polytope} $\LOP$ is defined as the convex hull of the $n!$ vectors $\lop_\pi \defa (k_{ij}(\pi))_{1\leq i<j\leq n} \in \RR^{\binom{n}{2}}$.
In this we follow the definition given in \cite{fishburn1992induced}.
The linear ordering polytope is an important and well-studied object in combinatorial optimization, see for example Chapter $6$ of \cite{Reinelt2011} and the references therein. For its study we will also consider the $n$-th \emph{permutahedron}; this is the polytope with $n!$ vertices $(\pi(i))_{1\leq i \leq n} \in \RR^n$ for $\pi \in S_n$.
Both polytopes carry a natural $S_n$-action and a $\mathbb{Z}_2$-action, which we will explain in Section 3.

This is the main result of this note: 
\begin{theorem}\label{thm:main}
There is a scalar product on $\RR^{\binom{n}{2}}$ such that there is an orthogonal decomposition $\RR^{\binom{n}{2}} = V_1 \bot V_2$ into subspaces of dimensions $n-1$ and $\binom{n-1}{2}$, such that:
\begin{enumerate}
\item The orthogonal projection of $\LOP$ onto $V_1$ is the $n$-th permutahedron.
\item The orthogonal projection onto $V_2$ is $\LOP[n-1]$. 
\item Both projections are equivariant with respect to the $\mathbb{Z}_2 \times S_{n}$-action on $\LOP$. This gives rise to a $\mathbb{Z}_2 \times S_{n}$-action on $\LOP[n-1]$.
\end{enumerate}
\end{theorem}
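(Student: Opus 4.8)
The plan is to realise the ambient space $\RR^{\binom n2}$ as the second exterior power $\Lambda^2 V$ of the permutation representation $V=\RR^n$ of $\Sn$, since this is where the representation theory promised by the title becomes visible. Concretely, I would first recenter the polytope: replacing each coordinate $k_{ij}$ by $\tk_{ij}\defa k_{ij}-\tfrac12$ turns the $\mathbb{Z}_2$-reversal $k\mapsto\eins-k$ into the point reflection $\tk\mapsto-\tk$, and the vertex of $\LOP$ indexed by $\pi$ becomes the antisymmetric tensor $\tfrac12\sum_{i<j}\operatorname{sgn}(\pi(i)-\pi(j))\,e_i\wedge e_j\in\Lambda^2 V$. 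Under the linear identification $e_{ij}\mapsto e_i\wedge e_j$ the natural $\Sn$-action on $\LOP$ is exactly $\Lambda^2$ of the coordinate permutation action, and I take the scalar product on $\RR^{\binom n2}$ to be the one induced from the standard inner product on $V$ (for which the $e_i\wedge e_j$ are orthonormal); this inner product is automatically $\Sn$-invariant.

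The decomposition then comes essentially for free. Writing $V=\mathrm{triv}\oplus\mathrm{std}$ with $u\defa n^{-1/2}\eins$ spanning the trivial line, one obtains the orthogonal $\Sn$-stable splitting
\[\Lambda^2 V \;=\; (u\wedge\mathrm{std})\;\bot\;\Lambda^2\mathrm{std}\;=:\;V_1\bot V_2,\]
of dimensions $n-1$ and $\binom{n-1}2$, the trivial-wedge-trivial part vanishing. Orthogonality is immediate from $u\perp\mathrm{std}$ together with the determinant formula for the induced inner product on $2$-tensors. Since $V_1,V_2$ are subrepresentations and the central $\mathbb{Z}_2$ acts by $-1$ on all of $\Lambda^2V$, both orthogonal projections are automatically $\mathbb{Z}_2\times\Sn$-equivariant; transporting the $\Sn$-action through the projection onto $V_2$ is what produces the new action on $\LOP[n-1]$ in part (3), and it visibly restricts to the usual $S_{n-1}$-action.

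For part (1) I would identify the projection onto $V_1=u\wedge\mathrm{std}$ with the contraction $\omega\mapsto u\wedge\iota_u\omega$. A short computation gives that the $e_m$-coefficient of $\iota_u\tilde{\lop}_\pi$ is proportional to $\tfrac{n+1}2-\pi(m)$, which is exactly a recentered permutahedron vertex; hence the image is an affine copy of the $n$-th permutahedron.

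Part (2) is the main obstacle. Using that $\{f_1,\dots,f_{n-1}\}$ with $f_i\defa e_i-\tfrac1n\eins$ is a basis of $\mathrm{std}$, I would expand the $V_2$-projection $q_\pi$ in the basis $\{f_a\wedge f_b\}_{a<b\le n-1}$ (eliminating $f_n=-\sum_{i<n}f_i$) and show that the coefficient of $f_a\wedge f_b$ equals $\tk_{ab}(\pi)-\tk_{an}(\pi)+\tk_{bn}(\pi)$. The delicate point is to check that this expression is always $\pm\tfrac12$ and, more importantly, that the resulting sign pattern defines a genuine linear order: I expect it to equal $\tfrac12\operatorname{sgn}(\sigma(a)-\sigma(b))$ for the permutation $\sigma\in S_{n-1}$ obtained from $\pi$ by the cyclic renormalisation $\sigma(i)\defa\bigl(\pi(i)-\pi(n)\bigr)\bmod n$ (the operation $\cyc$). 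Granting this, the projected vertices are exactly the recentered vertices $\tilde{\lop}_\sigma$ of $\LOP[n-1]$, the map $\pi\mapsto\sigma$ being an $n$-to-$1$ surjection onto $S_{n-1}$, so their convex hull is an affine copy of $\LOP[n-1]$. Verifying the linear-order property of this coefficient vector — equivalently, that the tournament it defines is acyclic and agrees with $\cyc$ — is the one genuinely combinatorial step, and it is where I would spend the most care.
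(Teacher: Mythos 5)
Your proposal is correct and is essentially the paper's own proof in different packaging: the paper realizes the same decomposition inside the regular representation $\RR^{\Sn}$ (as $\tilde{U}_{Inv}\cong F\wedge F=F_1\wedge F_0\oplus F_1\wedge F_1$, with $F$ the permutation representation), its permutahedron computation $v_i(\pi)=2\pi(i)-(n+1)$ is your contraction computation, and its basis $\tw_{ij}=\tk_{ij}-\tk_{in}+\tk_{jn}$ of $V_2$ is exactly your coefficient of $f_a\wedge f_b$. The one step you defer is true and is the same check the paper leaves to the reader: the identity $\tk_{ab}(\pi)-\tk_{an}(\pi)+\tk_{bn}(\pi)=\tfrac12\operatorname{sgn}\bigl(\sigma(a)-\sigma(b)\bigr)$ follows from a three-case analysis of where $\pi(n)$ sits relative to $\pi(a)$ and $\pi(b)$, and it is equivalent to the paper's observation that the $\tw_{ij}$ are invariant under left multiplication by $\cyc$ combined with evaluation at the unique coset representative fixing $n$.
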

If we apply the theorem repeatedly, we get 
\begin{corollary}
There is a scalar product on $\RR^{\binom{n}{2}}$ such that there is an orthogonal decomposition $\RR^{\binom{n}{2}} = V_n \bot V_{n-1} \bot \ldots \bot V_2$ satisfying
\begin{enumerate}
\item $\dim V_i = i-1$.
\item The orthogonal projection of $\LOP$ onto $V_i$ is the $i$-th permutahedron.
\end{enumerate}
\end{corollary}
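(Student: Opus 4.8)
The plan is to prove the corollary by a straightforward induction on $n$, using Theorem \ref{thm:main} as the inductive engine. The base case is trivial: for $n=2$ we have $\binom{n}{2}=1$, and $\LOP[2]$ is a single segment, which equals the $2$nd permutahedron (a segment with vertices $(1,2)$ and $(2,1)$, living in the line $x_1+x_2=3$). For the inductive step, I would apply the theorem to $\LOP$ to obtain the orthogonal splitting $\RR^{\binom{n}{2}} = V_1 \bot V_2$ with $\dim V_1 = n-1$ and $\dim V_2 = \binom{n-1}{2}$, where the projection onto $V_1$ gives the $n$-th permutahedron and the projection onto $V_2$ gives a copy of $\LOP[n-1]$.

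The crux is then to relabel and recurse. By the inductive hypothesis applied on the ambient space $V_2 \cong \RR^{\binom{n-1}{2}}$, there is a scalar product on $V_2$ and an orthogonal decomposition $V_2 = W_{n-1}\bot W_{n-2}\bot\cdots\bot W_2$ with $\dim W_i = i-1$, such that the orthogonal projection of $\LOP[n-1]$ onto $W_i$ is the $i$-th permutahedron. Setting $V_n \defa V_1$ and $V_i \defa W_i$ for $2\le i\le n-1$ then yields the claimed decomposition $\RR^{\binom{n}{2}} = V_n\bot V_{n-1}\bot\cdots\bot V_2$ with $\dim V_i = i-1$ for all $i$, and the projection onto $V_n=V_1$ is the $n$-th permutahedron by part (1) of the theorem, while the projection onto each $V_i=W_i$ for $i<n$ is the $i$-th permutahedron by the inductive hypothesis.

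The one genuine subtlety — and the step I expect to be the main obstacle — is the compatibility of the two scalar products and, more precisely, the transitivity of the projections. The projection of $\LOP$ onto $V_i\subseteq V_2$ for $i<n$ is a priori defined by composing the orthogonal projection $\RR^{\binom{n}{2}}\to V_2$ with the orthogonal projection $V_2\to W_i$; one must check that this composite equals the direct orthogonal projection $\RR^{\binom{n}{2}}\to V_i$ and hence that its image is genuinely $\LOP[n-1]$'s image, i.e.\ the $i$-th permutahedron. This holds precisely because $V_i\subseteq V_2$ and the scalar product on $\RR^{\binom{n}{2}}$ restricts to the one on $V_2$: for an orthogonal decomposition, projecting to a subspace of a summand factors through projection to that summand. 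I would state this as a short lemma about orthogonal projections and nested subspaces, or simply note that we may equip $V_2$ with the restricted scalar product so that the inductive hypothesis is applied with a compatible inner product, after which the factorization is immediate. No equivariance claim is needed for the corollary, so part (3) of the theorem plays no role here.
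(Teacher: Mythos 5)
Your proposal is correct and takes essentially the same route as the paper, whose entire proof is the phrase ``if we apply the theorem repeatedly'' --- i.e.\ precisely the induction you spell out. Your attention to the compatibility of the scalar products and the transitivity of orthogonal projections for nested subspaces fills in a detail the paper leaves implicit (in the paper's own framework this is absorbed by working inside $\kg$ with its invariant scalar product, where Section 2 notes that projection onto $V \subset U$ factors through $\poly{U} \rightarrow \poly{V}$).
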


The pure existence of the vector space decomposition $V_1 \bot V_2$ in Theorem \ref{thm:main} was known before.
First, it seems to have been known to experts in combinatorial optimization.
There, $\RR^{\binom{n}{2}}$ is considered as the space of functions on the edges of a complete directed graph. Then $V_1$ is the subspace of potentials (all functions $f$ such that $f(i,j) = g(i) - g(j), 1\leq i<j\leq n$ for some function $g$ defined on the nodes). Moreover $V_2$ is the space of circulations (all functions $f$ such that $\sum_i f(i,j) = \sum_k f(j,k)$ for all $j$). We thank S. Fiorini for bringing this fact to our attention.

Second, these spaces arise in the context of the analysis of rank data, see \cite{Marden}. There the $\RR^{\binom{n}{2}}$ is considered as the \emph{space of pairs} and $V_1$ is the \emph{space of means}. They correspond to the \emph{Babington Smith Model} resp. the \emph{Bradley/TerryMallows model}. In Section $7.4.3$ in \cite{Marden}, there are also a basis of $V_1$ and an generating set of $V_2$ given. The later contains the basis we give in Section 3. 

Third, the decomposition arises from representation theoretic considerations in \cite{2011arXiv1102.2460R}. In that paper it is also shown to be unique.

We also note that the existence of the $\mathbb{Z}_2 \times S_{n}$-action on $\LOP[n-1]$ is known from \cite{fiorini2001determining}, where this action is constructed from the $\mathbb{Z}_2 \times S_{n-1}$-action and a certain class of automorphisms borrowed from \cite{Bolotashvili}.

The projection $\LOP \rightarrow \LOP[n-1]$ maps $n$ vertices to one.
In \cite[Lemma 2]{fiorini2001determining} the \emph{trivial} and \emph{$3$-cycle} facets are shown to be the only facets having the maximal number of $n!/2$ vertices. Thus the preimages of these facets under the projection are also facets of the same type. Since facets inequalities for $\LOP$ are a major research topic, it might be interesting to consider the other known families of facets (see for example \cite{Reinelt2011}) from this point of view.

The construction of the linear ordering polytope can be generalized to arbitrary finite Coxeter groups. See \cite{Fiorini} for Type $B$ and \cite[Section 3.5]{2011arXiv1102.2460R} for the general case. There is still a decomposition into two invariant subspaces as in Theorem \ref{thm:main}. The space $V_1$ generalizes as expected: The action of the group is the geometric representation, so it is still irreducible and the polytope can be described as the dual of the Coxeter complex.
We know less about the other space $V_2$: We do not know if the representation is still irreducible and we also do not know the polytope. 
One could expect the polytope to be the polytope associated to the Coxeter group where one generator from one end of the Dynkin diagram was removed.  This would generalize the $(n-1)$st linear ordering polytope encountered in Theorem \ref{thm:main}. But this fails for dimensional reasons.
The representation of the group on this space is isomorphic to the one called $\psi_c$ in \cite[Theorem 37]{Renteln}. In the prove of Theorem 36 in that article, a generating system for the Types $A$, $D$ and $E$ is given, but no basis seems to be known.



\section{General considerations}
Consider the vector space $\RR^N$ with the usual scalar product $\scp{}{}$ and the standard orthonormal basis $e_1, \ldots, e_N$.
Let $\Delta = \set{\sum_{i} \lambda_{i} e_{i} \with 0\leq \lambda_{i} \leq 1, 1\leq i\leq N}$ denote the standard simplex.

To every subspace $U \subset \RR^N$ we can associate a polytope $\poly{U}$ as follows: Let $\poly{U}$ be the image of $\Delta$ under the orthogonal projection $\RR^N \rightarrow U$. If $V \subset U$ is another subspace, the projection $\RR^N \rightarrow V$ factors through $U$, giving a linear projection $\poly{U} \rightarrow \poly{V}$.

There is a second way to associate a polytope to a subspace $U \subset \RR^N$: Choose a generating system $b_1, \ldots, b_l$ of $U$ and write the vectors as rows into a matrix $\mathcal B$. Then we define $\polyy{b_1, \ldots, b_l}$ to be the convex hull of the column vectors of $\mathcal B$ in the $\RR^l$.

\begin{lemma}\label{lemma:basis}
The polytopes $\poly{U}$ and  $\polyy{b_1, \ldots, b_l}$ are affinely isomorphic.
\end{lemma}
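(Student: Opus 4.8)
The plan is to exhibit both polytopes as images of $\Delta$ under explicit linear maps and then identify those images. Recall that $\Delta$ is the standard simplex, the convex hull of the basis vectors $e_1, \ldots, e_N$, and write $\pi_U \colon \RR^N \to U$ for the orthogonal projection onto $U$. Since $\pi_U$ is linear, it carries the convex hull of the $e_i$ to the convex hull of their images, so $\poly{U} = \pi_U(\Delta) = \mathrm{conv}\set{\pi_U(e_1), \ldots, \pi_U(e_N)}$. On the other side, let $\mathcal{B} \colon \RR^N \to \RR^l$ also denote the linear map $x \mapsto \mathcal{B}x$; its value on $e_i$ is precisely the $i$-th column of $\mathcal{B}$, so $\polyy{b_1, \ldots, b_l} = \mathrm{conv}\set{\mathcal{B}e_1, \ldots, \mathcal{B}e_N} = \mathcal{B}(\Delta)$. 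Thus both polytopes are linear images of the same simplex $\Delta$.

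Next I would show that $\mathcal{B}$ factors through the projection $\pi_U$. The $j$-th coordinate of $\mathcal{B}x$ is $\scp{b_j}{x}$, and because each $b_j$ lies in $U$ we have $\scp{b_j}{x} = \scp{b_j}{\pi_U(x)}$ for every $x \in \RR^N$, since $x - \pi_U(x)$ is orthogonal to $U$. Therefore $\mathcal{B} = \mathcal{B} \circ \pi_U$, and consequently
\[
\polyy{b_1, \ldots, b_l} = \mathcal{B}(\Delta) = \mathcal{B}\bigl(\pi_U(\Delta)\bigr) = (\mathcal{B}|_U)\bigl(\poly{U}\bigr).
\]
So $\polyy{b_1, \ldots, b_l}$ is the image of $\poly{U}$ under the restriction of $\mathcal{B}$ to the subspace $U$.

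It remains to check that $\mathcal{B}|_U \colon U \to \RR^l$ is injective, and this is the one point that genuinely uses the hypotheses. If $x \in U$ satisfies $\mathcal{B}x = 0$, then $\scp{b_j}{x} = 0$ for all $j$; writing $x = \sum_j a_j b_j$ (possible since the $b_j$ generate $U$ and $x \in U$) gives $\scp{x}{x} = \sum_j a_j \scp{b_j}{x} = 0$, hence $x = 0$. Thus $\mathcal{B}|_U$ is an injective linear map, i.e. a linear isomorphism of $U$ onto its image $\mathcal{B}(U) \subseteq \RR^l$, and it restricts to a bijection $\poly{U} \to \polyy{b_1, \ldots, b_l}$. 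Being the restriction of a linear map, this bijection is in particular an affine isomorphism, which proves the lemma. The main, and essentially only, obstacle is the injectivity step: it is exactly what forces us to use both that the $b_j$ span $U$ and that $\scp{}{}$ is nondegenerate, whereas the remaining manipulations are formal facts about linear images of $\Delta$.
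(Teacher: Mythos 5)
Your proof is correct and is essentially the paper's own argument: the paper's one-line proof asserts that the linear map $\RR^N \to \RR^l$ given by the matrix $\mathcal B$ carries $\poly{U}$ bijectively onto $\polyy{b_1,\ldots,b_l}$, leaving the verification as ``a short calculation,'' and your factorization $\mathcal B = \mathcal B \circ \pi_U$ together with the injectivity of $\mathcal B|_U$ (via nondegeneracy of the scalar product and the fact that the $b_j$ span $U$) is precisely that calculation. One remark: you read $\Delta$ as the convex hull of $e_1,\ldots,e_N$, which is indeed what the lemma requires and what the paper's phrase ``standard simplex'' intends, even though the displayed formula defining $\Delta$ in the paper actually describes the unit cube --- a slip in the paper, not in your argument.
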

\begin{proof}
The map $\RR^N \rightarrow \RR^l$ defined by the matrix ${\mathcal B}$ maps $\poly{U}$ bijectively onto $\polyy{b_1, \ldots, b_l}$. This can be checked by a short calculation.
\end{proof}

Now we specialize to the situation where a finite group $G$ acts on the set of basis vectors $e_1, \ldots e_N$ of $\RR^N$. We extend this action to a permutation representation of $G$. Note that the scalar product and the simplex $\Delta$ are invariant under this action. We further assume that $U \subset \RR^N$ is an invariant subspace. Then $\poly{U}$ is invariant as well and the orthogonal projection is an equivariant mapping. Therefore, the action of $G$ induces automorphisms of $\poly{U}$.
If $V \subset U$ are both invariant subspaces, then the projection $\poly{U} \rightarrow \poly{V}$ is equivariant.


\section{The Linear Ordering Polytope}
Consider the space of functions $\kg = \set{f:\Sn \rightarrow \RR}$. There is a natural left action of the $\Sn$ on this space by $(\pi f)(\tau) \defa f(\tau \pi)$. We choose the canonical basis $e_{\pi}, \pi\in\Sn$ defined by $e_{\pi}(\tau) = \delta_{\pi, \tau}$. This basis is permuted by the action: $\tau e_\pi = e_{\pi\tau^{-1}}$, hence we are in the situation considered at the end of Section 2.
The vector space $\kg$ also carries a natural invariant scalar product $\scp{f}{g} = \sum_{\pi\in\Sn} f(\pi)g(\pi)$.
Let $\eins$ denote the constant function with value $1$ on $\Sn$.

Regard the $k_{ij}$ defined in the introduction as functions in $\kg$. Then the linear ordering polytope is $\polyy{k_{ij}, 1\leq i<j \leq n}$. We compute the action of the $\Sn$ on the $k_{ij}$:
\[\pi k_{ij} =
\begin{cases}
k_{\pi(i)\pi(j)} &\textnormal{ if } \pi(i) > \pi(j) \\
\eins - k_{\pi(j)\pi(i)} &\textnormal{ if } \pi(i) < \pi(j) \\
\end{cases}\]
Therefore, the vector space spanned by the $k_{ij}$ is not invariant, but the space $\Uinv$ generated by the $k_{ij}$ together with $\eins$ is. The polytope $\polyy{\eins, k_{ij}, 1\leq i<j\leq n}$ is clearly affinely isomorphic to $\LOP$ because the $\eins$ only adds a new coordinate with constant value $1$ to the polytope, thus moving it into an affine hyperplane.
Hence we can regard the linear ordering polytope as $\poly{\Uinv}$. 
The left action of $\Sn$ is what is called \emph{relabeling} action in \cite{fiorini2001determining}.
There is an additional symmetry on this space: Let $\wn \in S_n$ denote the reversal permutation defined by $\wn(1) = n, \wn(2) = n-1, \ldots, \wn(n) = 1$. It defines a $\mathbb{Z}_2$-action on $\kg$ via $(\wn f)(\tau) \defa f(\wn\tau)$
which commutes which the $S_n$-action mentioned before.
We compute $\wn k_{ij} = \eins - k_{ij}$, hence the space $\Uinv$ is also invariant under $\wn$.
This defines the \emph{duality} automorphism of \cite{fiorini2001determining}. 

Next we decompose $\Uinv$ into invariant subspaces. 
First we remove the $1$-dimensional subspace spanned by $\eins$. Denote it by $V_0 \defa \RR \eins$.
A new basis for the complement $\tilde{U}_{Inv}$ is given by
\[\tk_{ij}(\pi) \defa 
\begin{cases}
1 &\textnormal{ if } \pi(i) > \pi(j) \\
-1 &\textnormal{ if } \pi(i) < \pi(j) \\
\end{cases}\]
for $1\leq i < j \leq n$. Thus, we have $\tk_{ij} = 2k_{ij}-\eins$. It is convenient to define also $\tk_{ii}\defa 0$ and $\tk_{ji} \defa - \tk_{ij}$ for $i<j$. The action of $\mathbb{Z}_2 \times \Sn$ is
\begin{align*}
\pi \tk_{ij} &= \tk_{\pi(i)\pi(j)} \\
\wn \tk_{ij} &= -\tk_{ij} \,.
\end{align*}
Thus the subspace $\tilde{U}_{Inv}$ spanned by the $\tk_{ij}$ without $\eins$ is indeed an invariant subspace. Using the same argument as above, one sees that $\polyy{\tk_{ij}, 1\leq i < j \leq n}$ is isomorphic to $\LOP$. Since every subspace of $\tilde{U}_{Inv}$ is invariant under the $\wn$-action, we only need consider the $S_n$-action.

To understand the structure of $\tilde{U}_{Inv}$ we consider an additional space:
Let $F$ be an $n$-dimensional vector space with basis $e_1, \ldots, e_n$ and the standard $\Sn$-representation $\pi e_i \defa e_{\pi(i)}$. Then the mapping $\psi: F \wedge F \rightarrow \tilde{U}_{Inv} \, , \, e_i \wedge e_j \mapsto \tk_{ij}$ is an equivariant isomorphism.

It is well-known that $F$ decomposes into a direct sum of two invariant subspaces: $F_0$ with basis $\sum e_i$ and $F_1$ with basis $e_i - e_n, 1\leq i \leq n-1$. A short computation yields the decomposition of $\tilde{U}_{Inv} \cong F\wedge F = F_1 \wedge F_0 \oplus F_1 \wedge F_1$. For both summands it is known that they are irreducible, so we found the decomposition of $\tilde{U}_{Inv}$. Since the two subspaces are irreducible and non-isomorphic as representations, they are orthogonal with respect to the invariant scalar product.
Let $V_1$ and $V_2$ denote the spaces isomorphic to $F_0 \wedge F_1$ reps. $F_1 \wedge F_1$. We consider these spaces separately: 

\subsection{The space $V_1$}
We can get a basis for $V_1$ by applying the isomorphism $\psi$ to any basis of $F_1$. However, to see that $\poly{V_1}$ is a permutahedron, we use a particular generating system:
\[ v_i \defa \sum_{j=1}^n \tk_{ij}\]
It is easy to see that $\tau v_i = v_{\tau(i)}$. This is the same $S_n$-representation as on $V_1$. Since we already know that $\tilde{U}_{Inv}$ has only one invariant subspace with this representation, we conclude that the subspace generated by $v_j, 1\leq j \leq n$ is $V_1$. A short calculation reveals that $v_i(\pi) = 2\pi(i) - (n+1)$. Therefore, $\polyy{v_1,\ldots v_n}$ is affinely isomorphic to a permutahedron.

\subsection{The space $V_2$}
Recall that $e_i - e_n, 1\leq i \leq n-1$ is a basis for $F_1$. Hence $(e_i-e_n)\wedge (e_j-e_n), 1\leq i < j \leq n-1$ is a basis of $F_1 \wedge F_1$. We apply $\psi$ and get the following basis of $V_2$:
\[\tw_{ij} \defa \tk_{ij}-\tk_{in} + \tk_{jn}\]
 for $1 \leq i < j \leq n-1$. Define $\cyc$ to be the cycle $(1 2 \ldots n) \in S_n$ and let $C_n$ be the subgroup generated by $\cyc$. Note that $\tw_{ij}(\cyc \pi) = \tw_{ij}(\pi)$ for every $i<j$ and $\pi\in\Sn$.
This can be checked directly from the definition of the $\tw_{ij}$.
Thus vertices of $\polyy{\tw_{ij}, 1\leq i<j\leq n}$ can be labelled with the right cosets of $C_n$.
For each coset $C_n \pi$ there is a unique representative $\pi' \in \Sn$ with $\pi(n)=n$ and $\tw_{ij}(\pi) = \tw_{ij}(\pi')$. Therefore, the polytope does not change if we restrict to the subspace of permutations fixing $n$. This can be naturally identified with $\RR^{S_{n-1}}$. 
But for $\pi \in\Sn$ with $\pi(n)=n$, we have $\tw_{ij}(\pi) = \tk_{ij}(\pi)$. Hence, the polytope is the $(n-1)$-st linear ordering polytope.

\section*{Acknowledgements}
I wish to thank S. Fiorini for pointing out that the vector space decomposition from Theorem \ref{thm:main} was known in combinatorial optimization.

\bibliography{Literatur}
\bibliographystyle{amsplain}

\end{document}